\numberwithin{equation}{section}
\begin{document}

\title{Cohen-Macaulay Weighted Chordal Graphs} 


\author{Shuai Wei} 
\address{University of New Mexico
Department of Mathematics and Statistics, 
1 University of New Mexico, MSC01 1115
Albuquerque, NM 87131 USA}
\email{wei6@unm.edu}

\begin{abstract}
    In this paper I give a combinatorial characterization of all the Cohen-Macaulay weighted chordal graphs. In particular, it is shown that a weighted chordal graph is Cohen-Macaulay if and only if it is unmixed.
\end{abstract}

\maketitle

\section{Introduction}
\noindent \textbf{Convention.} Throughout this paper, let $\bbN = \{1,2,\dots,\}$, $n \in \bbN$, $\bbK$ a field, and $G = (V,E)$ a (finite simple) graph with vertex set $V = V(G) = [n] = \{1,\dots,n\}$ and edge set $E = E(G)$. An edge between vertices $i$ and $j$ is denoted $ij$. \par
    Combinatorial commutative algebra is a branch of mathematics that uses combinatorics and graph theory to understand certain algebraic constructions; it also uses algebra to understand certain objects in combinatorics and graph theory. \par
    To the graph $G$ one associates the positive integer-valued function $\lambda: E \to \bbN$, producing a weighted graph $G_\lambda$. For a weighted graph $G_\lambda$ Paulsen and Sather-Wagstaff~\cite{MR3055580} introduce the weighted edge ideal $I(G_\lambda) \subseteq \bbK[X_1,\dots,X_n]$ which is generated by all monomials $X_i^{\lambda(ij)}X_j^{\lambda(ij)}$ such that $ij \in E$. In particular, if $\lambda$ is the constant function defined by $\lambda(ij) = 1$ for $ij \in E$, then $I(G_\lambda) = I(G)$, where $I(G)$ is the edge ideal associated to $G$ in $\bbK[X_1,\dots,X_n]$, given by $I(G) = (X_iX_j \mid ij \in E)$. \par 
    A weighted graph $G_\lambda$ is called Cohen-Macaulay over $\bbK$ if $\bbK[X_1,\dots,X_n]/I(G_\lambda)$ is a Cohen-Macaulay ring, and is called Cohen-Macaulay if it is Cohen-Macaulay over any field. The general problem is to classify the weighted graphs which are Cohen-Macaulay over $\bbK$. As for unweighted graphs one cannot expect a general classification theorem. Paulsen and Sather-Wagstaff~\cite{MR3055580} characterized all Cohen-Macaulay weighted $K_1$-corona of graphs and in particular all Cohen-Macaulay weighted trees. In this paper we classify all Cohen-Macaulay weighted chordal graphs following the classification of all Cohen-Macaulay chordal graphs by Herzog, Hibi, and Zheng\cite{MR2231097}. The characterization is purely graph-theoretical, and it turns out that for weighted chordal graphs the Cohen-Macaulay property is independent of the field $\bbK$. \par 
    In Section~\ref{2}, we recall some definitions and notations from~\cite{MR3822066}, \cite{MR2231097}, and \cite{MR3055580}. We also prove a lemma used in proving the sufficient condition for the Cohen-Macaulay property to hold for weighted chordal graphs. In Section~\ref{3}, we classify all Cohen-Macaulay weighted chordal graphs. Theorem~\ref{CMUnmixedEquivalenceThm} gives a sufficient condition and Theorem~\ref{CMUnmixedEquivalenceNecessaryThm} says that the sufficient condition is also a necessary condition. 

\section{Preliminaries} \label{2}

In subsequent sections, let $\lambda:E \to \bbN$ be a positive integer-valued function and $G_\lambda$ a weighted graph. 

\begin{definition} \cite{MR3822066}
    A \emph{path} in $G$ is a non-empty subgraph $P = (V',E')$ of the form $V' = \{v_0,v_1,\dots,v_r\}$ and $E' = \{v_0v_1,v_1v_2,\dots,v_{r-1}v_r\}$, where $r \in \bbN \sqcup \{0\}$, we denote the path by $P = v_0v_1 \cdots v_r$ for simplicity and define $\mathring P_0 = v_1 \cdots v_r$.
\end{definition}

\begin{definition} \cite{MR3822066}
    If $G' = (V',E')$ is a subgraph of $G$ and $E' = \{ij \in E \mid i,j \in V'\}$, then $G'$ is an \emph{induced subgraph} of $G$; we say that $V'$ \emph{induces} or \emph{spans} $G'$ in $G$, and write $G' = G[V']$. A subgraph $G' = (V',E')$ of $G$ is a \emph{spanning} subgraph of $G$ if $V'$ spans all of $G$, i.e., if $V' = V$. \par
    If $U \subseteq V$, we write $G - U$ for $G[V \smallsetminus U]$. \par
    A maximal connected subgraph of $G$ is a \emph{component} of $G$. If $G_1,\dots,G_n$ are components of $G$, then $G_i = G[V(G_i)]$ for $i = 1,\dots,n$ and $V(G_1),\dots,V(G_n)$ partition $V$. 
\end{definition}

\begin{definition}
    A \emph{rooted tree} $T$ is a tree with a special vertex $r \in V(T)$ labelled as the ``root'' of the tree. A \emph{rooted forest} is a graph whose components are rooted trees. A subgraph $G' = (V',E')$ of $G$ is called a \emph{rooted spanning forest} of $G$ if $G'$ is a rooted forest and $V'$ spans $G$.
\end{definition}

\begin{definition}
    Let $T$ be a rooted tree with root $r$. For any $v \in T \smallsetminus \{r\}$, there is a unique path from $r$ to $v$, say $v_1 \dots v_k$ with $v_1 = r$ and $v_k = v$, we say that $v_i$ is a \emph{$T$-parent} of $v_{i+1}$ and $v_{i+1}$ is a \emph{$T$-child} of $v_i$ for $i = 1,\dots,k-1$. \par
    If $\ell \in V(T)$ has no $T$-child, we say that $\ell$ is a \emph{$T$-leaf}, otherwise, it is a \emph{$T$-inner vertex}. In particular, if $V(T) = \{r\}$, then $r$ is a $T$-\emph{leaf} but not a $T$-inner vertex.
\end{definition}

\begin{definition} \cite{MR2231097}
    A \emph{stable subset} or \emph{clique} of the graph $G$ is a subset $F$ of $[n]$ such that $ij \in E$ for all $i,j \in F$ with $i \neq j$. We write $\Delta(G)$ for the simplicial complex on $[n]$ whose faces are the stable subsets of $G$. A vertex of $\Delta(G)$ is \emph{free} if it belongs to exactly one facet of $\Delta(G)$, otherwise it is called \emph{nonfree}.
\end{definition}

\begin{definition}
    A \emph{chord} in the graph $G$ refers to an edge that connects two non-adjacent vertices within a cycle. The graph $G$ is called \emph{chordal} if every cycle of length $>3$ has a chord. The weighted graph $G_\lambda$ is called a \emph{weighted chordal graph} if the underlying graph $G$ is chordal.
\end{definition}

\begin{definition}\cite[Definition 1.4]{MR3055580}
    A \emph{weighted vertex cover} of $G_\lambda$ is an ordered pair $(V',\delta')$ with a subset $V' \subseteq V$ and a function $\delta': V' \to \bbN$ such that for each edge $ ij \in E$, we have that
    \begin{enumerate}
        \item $i \in V'$ and $\delta'(i) \leq \lambda(ij)$, or 
        \item $j \in V'$ and $\delta'(j) \leq \lambda(ij)$.
    \end{enumerate}
    The number $\delta'(i)$ is the \emph{weight} of $v_i$. 
\end{definition}

\begin{note}
    If $(V',\delta')$ is a weighted vector cover of $G_\lambda$, then $V'$ is a vector cover of $G$ by definition.
\end{note}

\begin{definition}\cite[Definition 1.9]{MR3055580}
    Given two weighted vertex covers $(V_1',\delta_1')$ and $(V_2',\delta_2')$ of $G_\lambda$, we write $(V_2',\delta_2') \leq (V_1',\delta_1')$ if $V_2' \subseteq V_1'$ and $\delta_2' \geq \delta_1'|_{V_2'}$. A weighted vertex cover $(V',\delta')$ is \emph{minimal} if there does not exist another weighted vertex cover $(V'',\delta'')$ such that $(V'',\delta'') < (V',\delta')$. The \emph{cardinality} of $(V',\delta')$ is defined to be the cardinality of $V'$, in symbols, $\abs{(V',\delta')} = \abs{V'}$. \par
    A weighted graph $G_\lambda$ is called \emph{unmixed} if all of the minimal weighted vertex covers of $G_\lambda$ have the same cardinality, otherwise it is called \emph{mixed}.  
\end{definition}

For the proof of our first main theorem we need the following algebraic fact:

\begin{lemma} \label{NZDOnSModuloJLemma}
    Let $R$ be a Noetherian ring, $S = R[X_1,\dots,X_n]$, $k \in \{0,\dots,n-1\}$ and $J = (I_1X_1,\dots,I_kX_k,\{X_i^{m_{ij}}X_j^{m_{ij}}\}_{1 \leq i < j \leq n})$ an ideal of $S$, where $I_1,\dots,I_k$ are ideals of $R$, no $I_j$'s exists in $J$ if $k = 0$, and $m_{ij} \in \bbN$ for any $i,j \in \bbN$. Then $X = \sum_{i=1}^n X_i$ is a non-zero divisor on $S/J$.
\end{lemma}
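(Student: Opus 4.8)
The plan is to translate the non-zero-divisor statement into one about associated primes and then into a purely combinatorial identity among the ``coefficient ideals'' of the multigraded ideal $J$. First I would record that $S$ carries its natural $\bbN^n$-grading with $\deg X_i = e_i$ and $R$ in degree $0$, and that $J$ is homogeneous for this grading: each generator $aX_j$ with $a \in I_j$ has degree $e_j$, and each $X_i^{m_{ij}}X_j^{m_{ij}}$ has degree $m_{ij}(e_i+e_j)$. Hence $S/J$ is $\bbN^n$-graded, and it is standard that all of its associated primes are $\bbN^n$-graded primes of $S$. Every graded prime has the form $P = \mathfrak p S + (X_i \mid i \in T)$ with $\mathfrak p \in \operatorname{Spec} R$ and $T \subseteq [n]$, since a graded prime containing no variable outside $(X_i\mid i\in T)$ is extended from $\mathfrak p = P \cap R$. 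As $S/P \cong (R/\mathfrak p)[X_i \mid i \notin T]$ sends $X$ to $\sum_{i\notin T}\overline{X_i}$, we get $X \in P$ if and only if $T = [n]$, i.e. $(X_1,\dots,X_n) \subseteq P$. Since the zerodivisors on $S/J$ form the union of its associated primes, it suffices to show that no associated prime contains $\mathfrak m := (X_1,\dots,X_n)$.

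Second, I would reduce this to the colon identity $(J : \mathfrak m) = J$: if $P = (J : g) \in \operatorname{Ass}(S/J)$ with $\mathfrak m \subseteq P$ and $g \notin J$, then $g \in (J:\mathfrak m)$, so $(J:\mathfrak m)=J$ forces the contradiction $g \in J$. The ideal $(J:\mathfrak m) = \bigcap_{i=1}^n (J:X_i)$ is again $\bbN^n$-graded, so I would compare degree by degree. A direct computation of graded pieces gives $J_\gamma = L_\gamma X^\gamma$, where the coefficient ideal $L_\gamma \subseteq R$ equals $R$ if $\gamma$ has a \emph{bad pair} (some $i<j$ with $\gamma_i \ge m_{ij}$ and $\gamma_j \ge m_{ij}$) and equals $\sum_{j \le k,\ \gamma_j \ge 1} I_j$ otherwise. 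Then $(J:X_i)_\gamma = L_{\gamma+e_i}X^\gamma$, so the identity becomes the ideal equality
\[
\bigcap_{i=1}^n L_{\gamma + e_i} = L_\gamma \qquad (\gamma \in \bbN^n).
\]
Because $L_\gamma \subseteq L_{\gamma'}$ whenever $\gamma \le \gamma'$ coordinatewise, the inclusion $\supseteq$ is automatic, and everything reduces to producing, for each $\gamma$ with no bad pair, a single index $i$ with $L_{\gamma+e_i}=L_\gamma$.

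The heart of the argument, and the step I expect to be the main obstacle, is this last combinatorial claim: if $\gamma$ has no bad pair, there is an index $i$ that is \emph{incrementable} ($i > k$ or $\gamma_i \ge 1$) and for which $\gamma + e_i$ still has no bad pair; for such $i$ one checks that both the bad-pair condition and the coefficient sum are unchanged, so $L_{\gamma+e_i}=L_\gamma$ and hence $\bigcap_i L_{\gamma+e_i} \subseteq L_{\gamma+e_i}=L_\gamma$. I would establish existence of such $i$ by contradiction via a chain argument: an incrementable index exists because $k \le n-1$ makes $i=n$ incrementable; and if incrementing \emph{every} incrementable index created a bad pair, then each incrementable $i$ would admit $j$ with $\gamma_j \ge m_{ij}$ and $\gamma_i = m_{ij}-1$, whence, using $m_{ij}\ge 1$, one gets $\gamma_j > \gamma_i$ and $\gamma_j \ge 1$, so that $j$ is again incrementable. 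Iterating yields an infinite sequence of indices with strictly increasing values $\gamma_{i_0} < \gamma_{i_1} < \cdots$, forcing them to be pairwise distinct and contradicting the finiteness of $[n]$. This produces the required safe index $i$, establishing $(J:\mathfrak m)=J$ and therefore that $X$ is a non-zero divisor on $S/J$.
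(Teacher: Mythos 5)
Your proof is correct, and it takes a genuinely different route from the paper's. The paper proves the lemma by decomposition: it invokes the primary decomposition of the weighted edge ideal of a weighted complete graph from Paulsen--Sather-Wagstaff \cite{MR3055580} (indexed by minimal weighted vertex covers), writes $J$ as a finite intersection of ideals of the form $(I_T,\ffX_{[k]\smallsetminus T},P^{[n]\smallsetminus([k]\smallsetminus T)}(V',\delta'))S$ --- distributing sums over intersections via a Noetherian-ring lemma --- and then checks that $X$ is a non-zero divisor modulo each intersectand because a minimal weighted vertex cover of a weighted complete graph always omits a vertex (\cite[Proposition 4.6]{MR3055580}), so the relevant associated prime misses some variable $X_\ell$. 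You replace all of this with the $\bbN^n$-graded structure: associated primes of $S/J$ are graded, hence of the form $\mathfrak{p}S+(X_i \mid i\in T)$, so $X$ can be a zero-divisor only if some associated prime contains $(X_1,\dots,X_n)$, and you rule that out by the degreewise verification of $(J:(X_1,\dots,X_n))=J$ via the coefficient ideals $L_\gamma$; the heart is your ``bad pair'' chain argument, which is exactly where the hypothesis $k\le n-1$ enters (index $n$ is incrementable, and the strictly increasing values $\gamma_{i_0}<\gamma_{i_1}<\cdots$ terminate by finiteness). Each approach buys something: the paper's proof is short modulo the cited machinery and stays inside the weighted-vertex-cover framework used throughout the rest of the paper, whereas yours is self-contained (no primary decomposition, no citations beyond the standard fact that associated primes of multigraded modules over Noetherian rings are graded), makes the role of $k\le n-1$ completely transparent, and would adapt to more general monomial configurations admitting a ``safe direction.'' One small point of care: the gradedness of associated primes is most often stated for $\bbZ$-gradings, so the $\bbZ^n$-graded version you use deserves an explicit citation or a one-line reduction; with that noted, your argument is complete.
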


\begin{proof}
    Let $A \subseteq [n]$ be nonempty. Let $(K_A)_{\lambda_A}$ be the weighted complete graph on $A$ with the function $\lambda_A: E(K_A) \to \bbN$ given by $\lambda_A(ij) = m_{ij}$ for $i,j \in A$ such that $1 \leq i < j \leq n$. For any weighted vertex cover $(V',\delta')$ of $(K_A)_{\lambda_A}$, define an ideal $P^A(V',\delta') := (X_i^{\delta'(i)} \mid i \in V')$ in $\bbK[X_j \mid j \in A]$. Let $I((K_A)_{\lambda_A})$ be the weighted edge ideal of $(K_A)_{\lambda_A}$ in $\bbK[X_j \mid j \in A]$. Then by~\cite[Theorem 3.5]{MR3055580}, we have that
    \[I((K_A)_{\lambda_A}) = \bigcap_{\min.\ (V',\delta')}P^A(V',\delta'),\]
    where the intersection is taken over all minimal weighted vertex covers of $(K_A)_{\lambda_A}$. For any $B \subseteq [n]$, let $\ffX_B = (X_j \mid j \in B)S$. For any $T \subseteq [k]$, set $I_T = \sum_{j \in T} I_j$. Then 
    \begin{align*}
        J &= (I_1X_1,\dots,I_kX_k,I((K_{[n]})_{\lambda_{[n]}}))S \\
        &= \bigcap_{T \subseteq [k]} (I_T, \ffX_{[k] \smallsetminus T}) + \bigcap_{(W',\gamma')} P^{[n]}(W',\gamma') \\
        &= \bigcap_{T \subseteq [k]} \bigcap_{(W',\gamma')} \bigl(I_T,\ffX_{[k] \smallsetminus T},P^{[n]}(W',\gamma')\bigr)S \\
        &= \bigcap_{T \subseteq [k]} \bigcap_{(V',\delta')} \bigl(I_T,\ffX_{[k] \smallsetminus T},P^{[n] \smallsetminus ([k] \smallsetminus T)}(V',\delta')\bigr)S,
    \end{align*}
    where $(W,\gamma')$ runs through all minimal weighted vertex covers of $(K_{[n]})_{\lambda_{[n]}}$, and $(V',\delta')$ runs through all minimal weighted vertex covers of $(K_{[n] \smallsetminus ([k] \smallsetminus T)})_{\lambda_{[n] \smallsetminus ([k] \smallsetminus T)}}$. The third equality follows from~\cite[Lemma 7.3.2]{MR3839602} since $R$ is Noetherian. \par 
    To prove that $X$ is a non-zero divisor modulo $J$ it suffices to show that $X$ is a non-zero divisor modulo each of the ideals $(I_T,\ffX_{[k] \smallsetminus T},P^{[n] \smallsetminus ([k] \smallsetminus T)}(V',\delta'))S$. It is equivalent to show that $X$ is a non-zero divisor on
    \[\frac{\overbar{R}[X_1,\dots,X_n]}{(\ffX_{[k] \smallsetminus T},P^{[n] \smallsetminus ([k] \smallsetminus T)}(V',\delta'))},\]
    where $\overbar{R} = \frac{R}{I_T}$. The associated prime of the primary ideal $(\ffX_{[k] \smallsetminus T},P^{[n] \smallsetminus ([k] \smallsetminus T)}(V',\delta'))$ generated by pure powers is $(\ffX_{[k] \smallsetminus T},\ffX_{V'})$ in $\overbar{R}[X_1,\dots,X_n]$. Since $(V',\delta')$ is a minimal weighted vertex cover of the weighted complete graph $(K_{[n] \smallsetminus ([k] \smallsetminus T)})_{\lambda_{[n] \smallsetminus ([k] \smallsetminus T)}}$, we have by \cite[Proposition 4.6]{MR3055580} there exists an $\ell \in [n] \smallsetminus ([k] \smallsetminus T)$ but $\ell \not\in V'$. Thus, $X$ is a non-zero divisor on it. 
\end{proof}

The following example illustrates the decomposition in previous lemma.

\begin{example}
    Let $R = \bbK[Y]$ be a polynomial ring and $S = R[X_1,X_2,X_3]$. Consider the ideal $J = (YX_1,YX_2,X_1^2X_2^2,X_1^2X_3^2,X_2^2X_3^2)S$ of $S$. Then with $K = (X_1^2X_2^2,X_1^2X_3^2,X_2^2X_3^2)S$,
    \begin{align*}
        J & = (YX_1,YX_2)S + K \\
        &= (X_1,X_2)S \cap (Y,X_2)S \cap (Y,X_1)S \cap (Y,Y)S + K \\
        &= (X_1,X_2,K)S \cap (Y,X_2,K)S \cap (Y,X_1,K)S \cap (Y,K)S \\
        &= (X_1,X_2)S \cap (Y,X_2,X_1^2X_3^2)S \cap (Y,X_1,X_2^2X_3^2)S \cap (Y,K)S \\
        &= (X_1,X_2)S \cap (Y,K)S \\
        &= (X_1,X_2,K)S \cap (Y,K)S \\
        &= (YX_1,YX_2)S + K.
    \end{align*}   
\end{example}

In this paper, we will use Lemma~\ref{NZDOnSModuloJLemma} in the context of $R = \bbK[X_1,\dots,X_n]$. Before proving the two main theorems, let's look at two particular examples. 

\begin{example}
    The weighted edge ideal of the following weighted graph $G_\lambda$ is mixed.
    \[
        \begin{tikzcd}
            & v_3 \ar[ld,dash,red,"2"'] \ar[ld,dash,red] \ar[ld,dash,red] \ar[ld,dash,red] \ar[ld,dash,red] \ar[ld,dash,red] \ar[ld,dash,red] \ar[rd,dash,"a"] & &[+15pt] \\
            v_2 \ar[rr,dash,"4",red] \ar[rr,dash,red] \ar[rr,dash,red] \ar[rr,dash,red] \ar[rr,dash,red] & & \nu_1 \ar[r,dash,"6"] & \nu_0 
        \end{tikzcd}
    \]
    Theorem~\cite[Theorem 3.5]{MR3055580} says that it suffices to find two minimal weighted cover of $G_\lambda$ of different cardinality. Note that there always exists a minimal weighted vertex cover of size $3 + 1 - 2 = 2$. For example, $(V',\delta') = \{\nu_1^1,v_2^1\}$ is a weighted vertex cover of $G_\lambda$, and it is cardinality-minimal in the sense that there doesn't exist any weighted vertex cover $(W',\gamma')$ of $G_\lambda$ such that $(W',\gamma') \leq (V,\delta')$ and $\abs{W'} < \abs{V'}$. By~\cite[Proposition 1.12]{MR3055580}, $(V',\delta')$ induces a minimal weighted vertex cover $(V',\delta'')$ for some $\delta'' \geq \delta'$. But there exists another minimal weighted vertex cover $\{v_3^{\min\{2,a\}},v_2^4,\nu_1^6\}$, whose size is 3. 
\end{example}

\begin{example}
    The weighted edge ideal of the following weighted graph $G_\lambda$ is mixed.
    \[
        \begin{tikzpicture}[line width=0.0mm]
            \foreach \i\j in {0/x,60/v,180/w,300/u}{
                \node (\j) at ({3*sin(\i)}, {3*cos(\i)}) {$\j$};
            }
            \node (c) at ({3*sin(120)}, {3*cos(120)}) {$\nu_1^T$}; 
            \node (e) at ({3*sin(240)}, {3*cos(240)}) {$\nu_1^S$};
            \foreach \i in {x,v,c,w,e,u}{ 
                \foreach \j in {x,v,c,w,e,u}{ 
                    \draw (\i)--(\j);
                }
            }
            \node (g) at ({-3+3*sin(240)}, {3*cos(240)}) {$\nu_0^S$}; 
            \node (h) at ({3+3*sin(120)}, {3*cos(120)}) {$\nu_0^T$};

            \draw[purple,line width = 0.4mm] (e)--(u) node [midway,fill=white] {4};  
            \draw[purple,line width = 0.4mm] (g)--(e) node [midway,fill=white] {6}; 
            \draw[purple,line width = 0.4mm] (u)--(w) node [pos=0.42,fill=white] {2}; 
     
            \draw[blue,line width = 0.4mm] (h)--(c) node [midway,fill=white] {9}; 
            \draw[blue,line width = 0.4mm] (c)--(v) node [midway,fill=white] {6}; 
            \draw[blue,line width = 0.4mm] (v)--(x) node [midway,fill=white] {3}; 
            \draw[line width = 0.1mm] (u)--(v) node [pos=0.42,fill=white] {9}; 
            \draw[line width = 0.1mm] (u)--(c) node [pos=0.42,fill=white] {7};
        \end{tikzpicture}  
    \]
    There exists a minimal weighted vertex cover of $G_\lambda$ of size $6+1+1-3 = 5$. Note that $(V',\delta') := \{(\nu_1^S)^{6},u^4,w^1,(\nu_1^T)^{9},v^6,x^1\}$ is a weighted vertex cover of $G_\lambda$ of cardinality 6, which is cardinality-minimal. Thus $(V',\delta')$ induces a minimal weighted vertex cover $(V',\delta'')$ for some $\delta'' \geq \delta'$.
\end{example}

\section{The conditions of a weighted chordal graph to be Cohen-Macaulay}  \label{3}

The next result gives a sufficient condition for weighted chordal graphs to be Cohen-Macaulay.

\begin{theorem} \label{CMUnmixedEquivalenceThm}
    Let $G_\lambda$ be a weighted chordal graph. Let $F_1,\dots,F_m$ be the facets of $\Delta(G)$ which admit a free vertex satisfying that for $i = 1,\dots,m$: there doesn't exist a rooted spanning forest $\ffF$ of $G[F_i]$ in which each component $T$ has a nonfree vertex $\nu_1^T$ as the root such that there is a nonfree vertex $\nu_0^T$ in $V \smallsetminus F_i$ with $\nu_0^T\nu_1^T \in E$ which satisfies that for any path $\nu_1^Tv_2 \cdots v_k$ in $T$ with $v_k$ a $T$-leaf: if $k \geq 2$, then $\lambda(\nu_0^T\nu_1^T) > \lambda(\nu_1^Tv_2) > \lambda(v_2v_3) > \cdots > \lambda(v_{k-1}v_k)$, and which satisfies that for any component(s) $S,T$ of $\ffF$: if $u$ is an $S$-inner vertex and $v$ is a $T$-inner vertex with $u \neq v$, then 
    \[\lambda(uv) > \min\{\max\{\lambda(uw) \mid \text{$u$ is the $S$-parent of $w$}\},\max\{\lambda(vx) \mid \text{$v$ is the $T$-parent of $x$}\}\};\]
    if $u$ is an $S$-inner vertex such that $u \nu_0^T \in E$, then
    \begin{align*}
        \lambda(u\nu_0^T) &> \min\{\max\{\lambda(uw) \mid \text{$u$ is the $S$-parent of $w$}\}, \\
        &\ \ \ \ \ \ \ \ \ \, \max\{\lambda(\nu_0^T\nu_1^Y) \mid \text{$Y$ is a component of $\ffF$ such that $\nu_0^T = \nu_0^Y$}\}\};
    \end{align*}
    and if $\nu_0^S\nu_0^T \in E$, then 
    \begin{align*}
        \lambda(\nu_0^S\nu_0^T) &> \min\{\max\{\lambda(\nu_0^S\nu_1^Y) \mid \text{$Y$ is a component of $\ffF$ such that $\nu_0^S = \nu_0^Y$}\}, \\
        &\ \ \ \ \ \ \ \ \ \, \max\{\lambda(\nu_0^T\nu_1^Z) \mid \text{$Z$ is a component of $\ffF$ such that $\nu_0^T = \nu_0^Z$}\}\}.
    \end{align*}
    Then the following conditions are equivalent.
    \begin{enumerate} 
        \item $G_\lambda$ is Cohen-Macaulay;
        \item $G_\lambda$ is Cohen-Macaulay over $\bbK$;
        \item $G_\lambda$ is unmixed;
        \item $[n]$ is the disjoint union of $F_1,\dots,F_m$.
    \end{enumerate}
\end{theorem}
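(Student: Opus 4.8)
The plan is to establish the cycle of implications $(1)\Rightarrow(2)\Rightarrow(3)\Rightarrow(4)\Rightarrow(1)$. The implication $(1)\Rightarrow(2)$ is immediate, since Cohen-Macaulayness over every field specializes to Cohen-Macaulayness over the given field $\bbK$. For $(2)\Rightarrow(3)$ I would invoke the standard fact that a Cohen-Macaulay quotient $\bbK[X_1,\dots,X_n]/I(G_\lambda)$ is equidimensional with no embedded primes, so all of its minimal primes share a common height. Since $\sqrt{I(G_\lambda)}=I(G)$, these minimal primes are exactly the ideals $(X_i\mid i\in C)$ for the inclusion-minimal vertex covers $C$ of $G$, whence all such $C$ have the same size. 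Finally, if $(V',\delta')$ is any minimal weighted vertex cover then $V'$ must itself be an inclusion-minimal vertex cover of $G$ (otherwise deleting a redundant vertex and restricting $\delta'$ yields a strictly smaller weighted vertex cover, contradicting minimality), so every minimal weighted vertex cover has cardinality $\abs{V'}=\abs{C}$; this is precisely unmixedness.

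For $(3)\Rightarrow(4)$ I would argue the contrapositive, and treat this as the combinatorial heart of the theorem. Assuming $[n]$ is not the disjoint union of $F_1,\dots,F_m$, there is either a vertex lying in two of these facets or a vertex covered by none of them; in the chordal setting the latter situation forces the existence of a facet $F$ having a free vertex for which the forbidden rooted spanning forest $\ffF$ genuinely exists. Using such an $\ffF$, I would exhibit two minimal weighted vertex covers of different cardinalities. The rooted tree structure, together with the strictly decreasing weight inequalities $\lambda(\nu_0^T\nu_1^T)>\lambda(\nu_1^Tv_2)>\cdots$ along each root-to-leaf path and the cross-component min/max inequalities, is exactly what permits ``trading'' the cover between the parent side and the child side of an edge: along a decreasing chain one may either cover an edge at its parent endpoint with a small weight or push the obligation down to the child, and the inequalities guarantee that both resulting assignments stay weighted vertex covers while changing $\abs{V'}$ by one. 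Producing these covers explicitly and upgrading them to minimal ones via~\cite[Proposition 1.12]{MR3055580} yields the desired mixedness.

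For $(4)\Rightarrow(1)$ I would induct on the number $m$ of facets in the partition, using Lemma~\ref{NZDOnSModuloJLemma} as the engine. First I would note that the partition $[n]=F_1\sqcup\cdots\sqcup F_m$ forces $\dim \bbK[X_1,\dots,X_n]/I(G_\lambda)=m$: each $F_i$ is a clique, so an independent set meets each $F_i$ in at most one vertex, giving the bound $m$, while the $m$ free vertices form an independent set of size $m$. In the base case $m=1$ the graph is a single clique, the quotient is one-dimensional, and Lemma~\ref{NZDOnSModuloJLemma} with $k=0$ supplies the non-zero divisor $X=\sum_i X_i$, so $\operatorname{depth}\geq 1=\dim$ and $G_\lambda$ is Cohen-Macaulay over every field. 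For the inductive step I would single out the clique $F_m$, set $R=\bbK[X_j\mid j\notin F_m]$, and present $I(G_\lambda)$ in the form required by Lemma~\ref{NZDOnSModuloJLemma}, with the intra-$F_m$ edges contributing the complete-graph generators and the edges joining $F_m$ to the earlier facets contributing the terms $I_jX_j$ for suitable ideals $I_j$ of $R$. The lemma then shows that the relevant sum of variables is a non-zero divisor, and passing to the quotient by it reproduces the weighted edge ideal of a chordal graph whose vertex set is partitioned into the $m-1$ remaining facets and which still satisfies all the hypotheses; by induction this quotient is Cohen-Macaulay, so $\operatorname{depth}$ rises back to $m=\dim$ and $G_\lambda$ is Cohen-Macaulay over every field.

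I expect the main obstacle to be the combinatorial construction in $(3)\Rightarrow(4)$: converting the failure of the disjoint-union condition into an explicit pair of minimal weighted vertex covers of distinct cardinalities requires carefully exploiting the strictly decreasing weights along each root-to-leaf path together with the cross-component min/max inequalities, and verifying that each trade described above genuinely preserves the weighted-vertex-cover property at \emph{every} edge, including the connecting edges $u\nu_0^T$ and $\nu_0^S\nu_0^T$ governed by the last two displayed inequalities. A secondary technical point is confirming, in $(4)\Rightarrow(1)$, that the presentation of $I(G_\lambda)$ matches the hypothesis of Lemma~\ref{NZDOnSModuloJLemma} and that the resulting quotient is again a weighted chordal graph meeting the facet conditions, so that the induction can be closed.
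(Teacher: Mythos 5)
Your cycle breaks in three places, and the cumulative effect is that the theorem's rooted-forest hypothesis is never correctly used anywhere in your argument; this alone is fatal, since by Theorem~\ref{CMUnmixedEquivalenceNecessaryThm} condition (4) \emph{without} that hypothesis does not imply Cohen-Macaulayness, so no hypothesis-free cycle can be correct. Concretely: in $(2)\Rightarrow(3)$, your claim that the underlying set $V'$ of a minimal weighted vertex cover must be an inclusion-minimal vertex cover of $G$ --- justified by ``deleting a redundant vertex and restricting $\delta'$'' --- is false. After deleting $v$, each edge $vw$ must be covered by $w$ at its \emph{already assigned} weight $\delta'(w)$, which may exceed $\lambda(vw)$, so the restriction need not be a weighted vertex cover at all. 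The paper's first example in Section~\ref{2} exhibits exactly this: $\{v_3^{\min\{2,a\}},v_2^4,\nu_1^6\}$ is a minimal weighted vertex cover of cardinality $3$ even though $\{v_2,\nu_1\}$ already covers the underlying graph. (If your claim were true, unmixedness of $G$ would force unmixedness of $G_\lambda$ for arbitrary weights, and the entire forest hypothesis would be superfluous.) The correct route is the paper's: by \cite[Theorem 3.5 and Proposition 3.13]{MR3055580} the associated primes of $I(G_\lambda)$ correspond to the minimal weighted vertex covers, so Cohen-Macaulayness forces them all to have the same cardinality. Your $(3)\Rightarrow(4)$ is likewise misdirected: $\neg(4)$ is a statement about the unweighted graph and cannot ``force'' the existence of a forbidden forest, which is in any case excluded by the theorem's standing hypothesis; the correct (and easy) argument is that $G_\lambda$ unmixed implies $G$ unmixed by \cite[Proposition 1.14]{MR3055580}, and then Herzog--Hibi--Zheng gives (4). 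You have inverted where the difficulty sits: the hard direction is $(4)\Rightarrow(3)$, where the paper uses the forest hypothesis and a path-building algorithm to show every minimal weighted vertex cover meets each $F_i$ in exactly $\abs{F_i}-1$ vertices, and your cycle omits this direction entirely.

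Your $(4)\Rightarrow(1)$ must then carry all the weight, and it fails concretely in two ways. First, $(S/I(G_\lambda))/(Y_m)$ is \emph{not} the weighted edge ring of $G-F_m$: the variables $X_j$ with $j\in F_m$ survive modulo one linear relation and the resulting ideal is not a weighted edge ideal, so there is no smaller instance to recurse on. The paper never collapses the graph; it shows directly that $Y_1,\dots,Y_m$ is a regular sequence, at step $i$ presenting $S/(I(G_\lambda)+(Y_1,\dots,Y_{i-1}))$ as a polynomial ring over the \emph{quotient} coefficient ring $\bbK[X_j \mid j \notin F_i]/(I(G'_{\lambda'}),Y_1,\dots,Y_{i-1})$ --- which is exactly why Lemma~\ref{NZDOnSModuloJLemma} is stated over an arbitrary Noetherian $R$ --- and it does so only after unmixedness (3) has been secured from (4) via the forest hypothesis. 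Second, your presentation does not satisfy the lemma's hypotheses: an edge $i_jr$ joining $i_j\in F_m$ to $r\notin F_m$ contributes $X_{i_j}^{\lambda(i_jr)}X_r^{\lambda(i_jr)}$, which has the form $(\text{ideal of }R)\cdot X_{i_j}^{\lambda(i_jr)}$ rather than $(\text{ideal of }R)\cdot X_{i_j}$, and this discrepancy is precisely where the weights matter. Test case: the path $abcd$ with $\lambda(ab)=\lambda(cd)=1$ and $\lambda(bc)=2$. Here the facets admitting a free vertex, $\{a,b\}$ and $\{c,d\}$, partition the vertex set, so (4) holds; yet $\{b^1,c^1\}$ and $\{a^1,b^2,d^1\}$ are both minimal weighted vertex covers, of cardinalities $2$ and $3$, so $G_\lambda$ is mixed and not Cohen-Macaulay. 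Correspondingly your very first regular-sequence step fails: $(X_a+X_b)\cdot X_bX_c^2 = X_aX_bX_c^2+X_b^2X_c^2 \in I(G_\lambda)=(X_aX_b,X_b^2X_c^2,X_cX_d)$ while $X_bX_c^2\notin I(G_\lambda)$, so $Y_1=X_a+X_b$ is a zerodivisor. Since nothing in your $(4)\Rightarrow(1)$ lets the decreasing-weight conditions intervene, your argument cannot distinguish this mixed example from a Cohen-Macaulay one, and so it cannot be repaired without restructuring along the paper's lines.
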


\begin{proof}
    (a) $\Longrightarrow$ (b) is trivial. \par
    (b) $\Longrightarrow$ (c) Since $\bbK[X_1,\dots,X_n]/I(G_\lambda)$ is Cohen-Macaulay, we have $I(G_\lambda)$ is unmixed. So $G_\lambda$ is unmixed by~\cite[Theorem 3.5 and Proposition 3.13]{MR3055580}. \par
    (c) $\Longrightarrow$ (d) Since $G_\lambda$ unmixed, we have $G$ is unmixed by~\cite[Proposition 1.14]{MR3055580}. So (d) holds by~\cite[Theorem 2.1]{MR2231097}. \par
    (d) $\Longrightarrow$ (c) Let $(V',\delta')$ be a minimal weighted vertex cover of $G_\lambda$ with $V' \subseteq [n]$ and $\delta': V' \to \bbN$. Then for $i = 1,\dots,m$ we have $\abs{V' \cap F_i} \geq \abs{F_i} -1$ since $F_i$ is a clique of $G$. Suppose for some $i \in \{1,\dots,m\}$, we have $\abs{V' \cap F_i} = \abs{F_i}$, i.e., $F_i \subseteq V'$. Then $F_i$ contains a nonfree vertex by~\cite[Proposition 4.6]{MR3055580}. Let $v \in F_i$. Claim. There exists a path $\ell_0\ell_1 \cdots \ell_{k-1}\ell_k$ in $G$ with $\ell_0 \not\in F_i$, $\ell_1,\dots,\ell_{k-1} \in F_i$ and $\ell_k = v$ satisfying that if $k \geq 2$, then $\lambda(\ell_0\ell_1) > \cdots > \lambda(\ell_{k-2}\ell_{k-1}) > \lambda(\ell_{k-1}\ell_k)$. \par
    We will use an algorithm to find such a path. Since $v \in F_i \subseteq V'$, we have there exists $w_1 \in V \smallsetminus \{v\}$ such that $\delta'(v) \leq \lambda(vw_1) \smallunderbrace{< \delta'(w_1)}_{\text{if }w_1 \in V'}$. We then go through the following steps: 
    \begin{enumerate}
        \item [Step 1.]
            Initially, let $j := 1$.
        \item [Step 2.]
            If $w_j \not\in F_i$ and $j = 1$, then we have a path $w_1v$ in $G$ with $w_1 \not \in F_j$ and $v \in F_i$, so the claim is justified.  If $w_j \not\in F_i$ and $j \geq 2$, then by induction we have a path $w_jw_{j-1} \cdots w_1v$ in $G$ with $w_j \not\in F_i$ and $w_{j-1},\dots,w_1,v \in F_i$ such that 
            \[\delta'(v) \leq \lambda(vw_1) < \delta'(w_1) \leq \lambda(w_1w_2) < \delta'(w_2) \leq \cdots < \delta'(w_{j-1}) \leq \lambda(w_{j-1}w_j) \smallunderbrace{< \delta'(w_j)}_{\text{if }w_j \in V'},\]
            implying that $\lambda(w_jw_{j-1}) > \lambda(w_{j-1}w_{j-2}) > \cdots > \lambda(w_2w_1) > \lambda(w_1v)$ and so the claim is justified. Hence in either case we jump out of the loop. 
        \item [Step 3.]
            If $w_j \in F_i$, then there exists $w_{j+1} \in V \smallsetminus \{w_j\}$ such that $\delta'(w_j) \leq \lambda(w_jw_{j+1}) \smallunderbrace{< \delta'(w_{j+1})}_{\text{if }w_{j+1} \in V'}$, so by induction there exists a path $w_{j+1}w_jw_{j-1} \dots w_1v$ with $w_j,\dots,w_1,v \in F_i$ such that 
            \[\delta'(v) \leq \lambda(vw_1) < \delta'(w_1) \leq \lambda(w_1w_2) < \delta'(w_2) \leq \cdots < \delta'(w_j) \leq \lambda(w_jw_{j+1}) \underbrace{< \delta'(w_{j+1})}_{\text{if }w_{j+1} \in V'}.\]
        \item [Step 4.]
            Re-define $j := j+1$. If $w_j \not\in F_i$, then go back to Step 2. If $w_j \in F_i$, then go back to Step 3.
    \end{enumerate}
    Since $\abs{F_i}$ is finite and $w_j,\dots,w_1,v \in F_i$ are distinct to each other and $F_i$ contains a nonfree vertex, we have after some finite loops, it will enter Step 2 and the claim will be proved. \par
    Let $v_1 \in F_i$. Then by the claim there exists a path $P_1 := \ell_0\ell_1 \cdots \ell_{k-1}v_1$ in $G$ with $\ell_0 \not\in F_i$ and $\ell_1,\dots,\ell_{k-1},v_1 \in F_i$ satisfying that if $k \geq 2$, then $\lambda(\ell_0\ell_1) > \cdots > \lambda(\ell_{k-2}\ell_{k-1}) > \lambda(\ell_{k-1}v_1)$. Assume $V(\mathring P_1) = F_i$. Then $k \geq 2$ since $F_i$ contains a free vertex. So there exists a rooted spanning forest $\mathring P_1$ such that the unique component $\mathring P_1$ has a nonfree vertex $\nu_{1}^{\mathring P_1} := \ell_1$ as the root and there is a nonfree vertex $\nu_0^{\mathring P_1} := \ell_0$ in $V \smallsetminus F_i$ with $\ell_0\ell_1 \in E$ which satisfies that for the unique path $\ell_1 \cdots \ell_{k-1}v_1$ in the tree $\mathring P_1$ with $v_1$ a $\mathring P_1$-leaf if $k \geq 2$, then $\lambda(\ell_0\ell_1) > \cdots > \lambda(\ell_{k-2}\ell_{k-1}) > \lambda(\ell_{k-1}v_1)$. Moreover, if $\ell_\alpha$ and $\ell_\beta$ are $\mathring P_1$-inner vertices with $1 \leq \alpha < \beta \leq k-1$, then since $(V',\delta')$ is a weighted vertex cover of $G_\lambda$ and $\ell_\alpha,\ell_\beta \in V(\mathring P_1) = F_i \subseteq V'$, after settting $\ell_k = v_1$ we have
    \begin{align*}
        \lambda(\ell_\alpha\ell_\beta) &\geq \min\{\delta'(\ell_\alpha),\delta'(\ell_\beta)\} \\
        & > \min\left\{\lambda(\ell_\alpha \ell_{\alpha+1}),\lambda(\ell_\beta \ell_{\beta+1})\right\} \\
        & = \min\{ \max\{\lambda(\ell_\alpha w) \mid \text{$\ell_\alpha$ is the $\mathring P_1$-parent of $w$}\},\max\{\lambda(\ell_\beta x) \mid \text{$\ell_\beta$ is the $\mathring P_1$-parent of $x$}\}\};
    \end{align*}
    and if $\ell_\alpha$ is an $\mathring P_1$-inner vertex such that $u \nu_0^{\mathring P_1} \in E$, then $\ell_\alpha = \ell_1$, and so
    \begin{align*}
        \lambda(\ell_\alpha\nu_0^{\mathring P_1}) &= \lambda(\ell_0\ell_1) \\
        &> \max\{\lambda(\ell_1w) \mid \text{$\ell_1$ is the $\mathring P_1$-parent of $w$}\} \\
        &\geq \min\{\max\{\lambda(\ell_\alpha w) \mid \text{$\ell_\alpha$ is the $\mathring P_1$-parent of $w$}\},\lambda(\nu_0^{\mathring P_1}\nu_1^{\mathring P_1})\},
    \end{align*}
    a contradiction. \par
    On the other hand, we assume $V(\mathring P_1) \subsetneq F_i$. We then go through the following steps: 
    \begin{enumerate}
        \item [Step 1.]
            Initially, let $b := 1$.
        \item [Step 2.]
            If there exists a vertex $v \in F_i \smallsetminus (V(\mathring P_1) \cup \cdots \cup V(\mathring P_b))$, then there exists a path $P_{b+1} := h_0h_1 \cdots h_{k'-1}v$ in $G$ with $h_0 \not\in F_i$ and $h_1,\dots,h_{k'-1},v \in F_i$ satisfying that if $k' \geq 2$, then $\lambda(h_0h_1) > \cdots > \lambda(h_{k'-2}\lambda_{k'-1}) > \lambda(h_{k'-1}v)$. If $V(\mathring P_{b+1}) \cap (V(\mathring P_1) \cup \cdots \cup V(\mathring P_b)) = \emptyset$, then we put $\mathring P_{b+1}$ into the rooted forest formed by $\mathring P_1,\dots,\mathring P_b$ while making $\mathring P_{b+1}$ a rooted tree with root $h_1$ and making $\mathring P_{b+1}$ a component of the rooted forest. Assume $V(\mathring P_{b+1}) \cap (V(\mathring P_1) \cup \cdots \cup V(\mathring P_b)) \neq \emptyset$. Let $t = \max\{c \geq 1 \mid h_c \in V(\mathring P_1) \cup \cdots \cup V(\mathring P_b)\}$. Assume $h_t = u_s \in V(\mathring P_d)$ with $P_d := u_0u_1 \cdots u_j$ for some $d \in \{1,\dots,b\}$ and $s \in \{1,\dots,j\}$. Set $P_{b+1}' := u_0u_1 \cdots u_{s-1}h_t h_{t+1} \cdots h_{k'-1}v$. Then $P_{b+1}'$ is a path in $G$ with $u_0 \not \in F_i$ and $u_1,\dots,u_{s-1},h_t,h_{t+1},\dots,h_{k'-1},v \in F_i$.  Since $\lambda(u_{s-1}h_t) = \lambda(u_{s-1}u_s) \geq \delta'(u_s) = \delta'(h_t) > \lambda(h_th_{t+1})$, we have $\lambda(u_0u_1) > \cdots > \lambda(u_{s-1}h_t) > \lambda(h_th_{t+1}) > \cdots > \lambda(h_{k'-1}v)$. Re-define $P_{b+1} := P_{b+1}'$, then $\mathring P_{b+1}$ is merged into the component whose root is $u_1$, in the rooted forest formed by $\mathring P_1,\dots,\mathring P_b$.
        \item [Step 3.]
            If $V(\mathring P_1) \cup \cdots \cup V(\mathring P_{b+1}) \subsetneq F_i$, then re-define $b := b+1$ and go back to Step 2.
        \item [Step 4.]
            If $V(\mathring P_1) \cup \cdots \cup V(\mathring P_{b+1}) = F_i$, then there exists a rooted spanning forest $\ffF$ such that by induction each component $T$ in $\ffF$, which is formed by a subset of the paths $\mathring P_1,\dots,\mathring P_{b+1}$ say $\mathring P_{i_1},\dots,\mathring P_{i_\iota}$, has the nonfree vertex $\nu_1^T$ as the root, which is the first vertex of any one of the paths $\mathring P_{i_1},\dots,\mathring P_{i_\iota}$, such that there is a nonfree vertex $\nu_0^T$ in $V \smallsetminus F_i$ with $\nu_0^T\nu_1^T \in E$, which is the first vertex of any one of the paths $P_{i_1},\dots,P_{i_\iota}$, which satisfies that for the each path $\nu_1^Tu_2 \cdots u_{j-1}u_j$ in the tree $T$ with $u_j$ a $T$-leaf if $j \geq 2$, then $\lambda(\nu_0^T\nu_1^T) > \lambda(\nu_1^Tu_2) > \cdots > \lambda(u_{j-1}u_j)$. Moreover, for any component(s) $S,T$ of $\ffF$, if $u$ is an $S$-inner vertex and $v$ is a $T$-inner vertex with $u \neq v$, then since $(V',\delta')$ is a weighted vertex cover of $G_\lambda$ and $u,v \in  (V(\mathring P_1) \cup \cdots \cup V(\mathring P_{b+1})) = F_i \subseteq V'$, we have 
            \begin{align*}
                \lambda(uv) &\geq \min\{\delta'(u),\delta'(v)\} \\
                &> \min\{\max_{1 \leq c \leq {b+1}}\{\lambda(uw) \mid uw \in E(\mathring P_c)\},\max_{1 \leq d \leq {b+1}}\{\lambda(vx) \mid vx \in E(\mathring P_d)\}\} \\
                &= \min\{\max\{\lambda(uw) \mid \text{$u$ is the $S$-parent of $w$}\},\max\{\lambda(vx) \mid \text{$v$ is the $T$-parent of $x$}\}\};
            \end{align*}
            if $u$ is an $S$-inner vertex such that $u \nu_0^T \in E$, then since $(V',\delta')$ is a weighted vertex cover of $G_\lambda$, 
            \begin{align*}
                \lambda(u\nu_0^T) &\geq \min\{\delta'(u)\smallunderbrace{, \delta'(\nu_0^T)}_{\text{if }\nu_0^T \in V'}\}\\
                &> \min\{\max_{1 \leq c \leq b+1}\{\lambda(uw) \mid uw \in E(\mathring P_c)\}, \\
                &\ \ \ \ \ \ \ \ \ \, \max\{\lambda(\nu_0^T\nu_1^Y) \mid \text{$Y$ is a component of $\ffF$ such that $\nu_0^T = \nu_0^Y$}\}\} \\
                &= \min\{\max\{\lambda(uw) \mid \text{$u$ is the $S$-parent of $w$}\}, \\
                &\ \ \ \ \ \ \ \ \ \, \max\{\lambda(\nu_0^T\nu_1^Y) \mid \text{$Y$ is a component of $\ffF$ such that $\nu_0^T = \nu_0^Y$}\}\};
            \end{align*}
            and if $\nu_0^S\nu_0^T \in E$, then since $(V',\delta')$ is a weighted vertex cover of $G_\lambda$, by symmetry we assume $\nu_0^S \in V'$ and $\delta'(\nu_0^S) \leq \lambda(\nu_0^S\nu_0^T)$, so
            \begin{align*}
                \lambda(\nu_0^S\nu_0^T) &\geq \delta'(\nu_0^S) \\
                &> \max\{\lambda(\nu_0^S\nu_1^Y) \mid \text{$Y$ is a component of $\ffF$ such that $\nu_0^S = \nu_0^Y$}\} \\
                &\geq \min\{\max\{\lambda(\nu_0^S\nu_1^Y) \mid \text{$Y$ is a component of $\ffF$ such that $\nu_0^S = \nu_0^Y$}\},\\
                &\ \ \ \ \ \ \ \ \ \, \max\{\lambda(\nu_0^T\nu_1^Z) \mid \text{$Z$ is a component of $\ffF$ such that $\nu_0^T = \nu_0^Z$}\}\},
            \end{align*}
            a contradiction.
    \end{enumerate}
    Since $\abs{F_i}$ is finite, after some finite loops it will enter Step 4 and there will be a contradiction. \par
    Thus, $\abs{V' \cap F_i} = \abs {F_i}-1$ for $i = 1,\dots,m$. Since $[n]$ is the disjoint union $[n] = \bigcup_{i=1}^mF_i$, it follows that $\abs{(V',\delta')} = \abs {V'} = n-m$ and $G_\lambda$ is unmixed. \par
    (c) and (d) $\Longrightarrow$ (a) We have that $G_\lambda$ is unmixed and each minimal weighted vertex cover of $G_\lambda$ has cardinality $n-m$. Let $S = \bbK[X_1,\dots,X_n]$. Then $\dim S/I(G_\lambda) = m$. Let $Y_i = \sum_{j \in F_i}X_j$ for $i = 1,\dots,m$. We will show that $Y_1,\dots,Y_m$ is a regular sequence on $S/I(G_\lambda)$. This then yields that $G_\lambda$ is Cohen-Macaulay. Let $i \in \{2,\dots,m\}$, $F_i := \{i_1,\dots,i_\ell\}$ and assume that $i_1,\dots,i_k$ are the nonfree vertices of $G[F_i]$. Then $k \in \{0,\dots,n-1\}$. Let $G' = G[[n] \smallsetminus \{i_1,\dots,i_\ell\}]$ and $\lambda' = \lambda|_{E(G')}$. Then $I(G_\lambda) = (I(G'_{\lambda'}),J_1X_{i_1},\dots,J_k X_{i_k},J)$, where $J_j = (X_{i_j}^{\lambda(i_jr)-1}X_{r}^{\lambda(i_jr)} \mid i_jr \in E)$ for $j = 1,\dots,k$, and $J = (X_{i_r}^{\lambda(i_ri_s)}X_{i_s}^{\lambda(i_ri_s)} \mid 1 \leq r < s \leq \ell)$. Since $[n]$ is the disjoint union of $F_1,\dots,F_m$ we have all generators of the ideal $(I(G'_{\lambda'}),Y_1,\dots,Y_{i-1})$ belong to $\bbK[X_j \mid j \in [n] \smallsetminus F_i]$. Thus, if we set $R = \frac{\bbK[X_j \mid j \in [n] \smallsetminus F_i]}{(I(G'_{\lambda'}),Y_1,\dots,Y_{i-1})}$,
    then
    \[\frac{\frac{S}{I(G_\lambda)}}{(Y_1,\dots,Y_{i-1})\frac{S}{I(G_\lambda)}} \cong \frac{R[X_{i_1},\dots,X_{i_\ell}]}{(I_1X_{i_1},\dots,I_k X_{i_k},\{X_{i_r}^{\lambda(i_ri_s)}X_{i_s}^{\lambda(i_ri_s)} \mid 1 \leq r < s \leq \ell\})},\]
    where for each $j$, the ideal $I_j$ is the image of $J_j$ under the residue class map onto $R$. Thus, Lemma~\ref{NZDOnSModuloJLemma} implies that $Y_i$ is regular on $(S/I(G_\lambda))/(Y_1,\dots,Y_{i-1})(S/I(G_\lambda))$.
\end{proof}

We use the following example to illustrate the previous theorem and its proof.

\begin{example}
    The following weighted chordal graph $G_\lambda$ where we only give part weights of $\lambda$ is not Cohen-Macaulay. In the drawing, let $\nu^\tau$ denote an element of $(V',\delta')$ with $\nu \in V'$ and $\delta'(\nu) = \tau$. 
    \[
        \begin{tikzpicture}[line width=0.0mm]
            \foreach \i\j in {0/a^1,30/b^4,60/c^1,90/d^1,120/e^4,150/f^1,180/g^6,210/h^1,240/i^6,270/j^3,300/k^1,330/l^3}{
                \node (\j) at ({3*sin(\i)}, {3*cos(\i)}) {$\j$};
            }
            \foreach \i in {a^1,b^4,c^1,d^1,e^4,f^1,g^6,h^1,i^6,j^3,k^1,l^3}{ 
                \foreach \j in {a^1,b^4,c^1,d^1,e^4,f^1,g^6,h^1,i^6,j^3,k^1,l^3}{ 
                    \draw[gray] (\i)--(\j);
                }
            }   
            \foreach \i\j in {0/m^8,60/n^4,120/o^1,180/p^1,240/q^1,300/r^4}{
                \node (\j) at ({-4+2*sin(\i)}, {-6+2*cos(\i)}) {$\j$};
            }
            \foreach \i in {m^8,n^4,o^1,p^1,q^1,r^4}{ 
                \foreach \j in {m^8,n^4,o^1,p^1,q^1,r^4}{ 
                    \draw[gray] (\i)--(\j);
                }
            }   
            \foreach \i\j in {0/s^5,60/t^1,120/u^1,180/v^1,240/w^1,300/x^1}{
                \node (\j) at ({4+2*sin(\i)}, {-6+2*cos(\i)}) {$\j$};
            }
            \foreach \i in {s^5,t^1,u^1,v^1,v^1,w^1,x^1}{ 
                \foreach \j in {s^5,t^1,u^1,v^1,v^1,w^1,x^1}{ 
                    \draw[gray] (\i)--(\j);
                }
            }   
            \node (y^1) at (0, {-6+2*cos(60)}) {$y^1$};
            \node (z) at (0, {-6+2*cos(120)}) {$z$};
            \draw (y^1)--(z);
            \draw (y^1)--(f^1);
            \draw (y^1)--(h^1);
            \draw (y^1)--(n^4);
            \draw (y^1)--(x^1);
            \draw (s^5)--(f^1); 
            \draw (m^8)--(h^1); 
            \draw (s^5)--(g^6); 
            \draw (e^4)--(x^1); 
            \draw (f^1)--(x^1);
            \draw (g^6)--(y^1); 
            \draw (g^6)--(w^1); 
            \draw (g^6)--(o^1); 
            \draw (h^1)--(o^1); 
            \draw (o^1)--(y^1); 
            \draw (w^1)--(y^1); 
            \draw[black,line width = 0.15mm] (m^8)--(s^5) node [midway,fill=white] {5};
            \draw[red,line width = 0.4mm] (c^1)--(g^6) node [midway,fill=white] {5};
            \draw[red,line width = 0.4mm] (a^1)--(b^4) node [midway,fill=white] {2};
            \draw[red,line width = 0.4mm] (b^4)--(l^3) node [midway,fill=white] {3};
            \draw[red,line width = 0.4mm] (b^4)--(g^6) node [midway,fill=white] {4};
            \draw[green,line width = 0.4mm] (i^6)--(f^1) node [midway,fill=white] {5};
            \draw[green,line width = 0.2mm] (i^6)--(m^8) node [midway,fill=white] {7};
            \draw[red,line width = 0.2mm] (g^6)--(m^8) node [near end,fill=white] {6};
            \draw[violet,line width = 0.4mm] (k^1)--(j^3) node [midway,fill=white] {2};
            \draw[violet,line width = 0.2mm] (j^3)--(r^4) node [midway,fill=white] {3};
            \draw[black,line width = 0.0mm] (m^8)--(b^4) node [near start,fill=white] {5};
            \draw[blue,line width = 0.2mm] (h^1)--(n^4) node [midway,fill=white] {3};
            \draw[purple,line width = 0.4mm] (d^1)--(e^4) node [midway,fill=white] {3};
            \draw[purple,line width = 0.2mm] (e^4)--(s^5) node [midway,fill=white] {4};
            \draw[black,line width = 0.15mm] (r^4)--(i^6) node [midway,fill=white] {4};
            \draw[black,line width = 0.15mm] (m^8)--(r^4) node [midway,fill=white] {9}; 
            \draw[black,line width = 0.15mm] (m^8)--(n^4) node [midway,fill=white] {5}; 
            \draw[black,line width = 0.15mm] (r^4)--(n^4) node [midway,fill=white] {6}; 
            \draw[black,line width = 0.15mm] (i^6)--(b^4) node [midway,fill=white] {6}; 
            \draw[black,line width = 0.15mm] (i^6)--(j^3) node [midway,fill=white] {4}; 
            \draw[black,line width = 0.15mm] (j^3)--(e^4) node [midway,fill=white] {3}; 
            \draw[black,line width = 0.15mm] (b^4)--(e^4) node [near start,fill=white] {4}; 
            \draw[black,line width = 0.15mm] (g^6)--(e^4) node [near end,fill=white] {5}; 
            \draw[black,line width = 0.15mm] (g^6)--(i^6) node [near start,fill=white] {8}; 
            \draw[black,line width = 0.15mm] (i^6)--(e^4) node [midway,fill=white] {4}; 
        \end{tikzpicture}
    \]
    Let $F_1 = \{a,\dots,l\}$, $F_2 = \{m,\dots,r\}$, $F_3 = \{s,\dots,x\}$ and $F_4 = \{y,z\}$. There is a rooted spanning forest $\ffF$ of $G[F_1]$ consisting of components $T_1,\dots,T_5$ whose roots are $\nu_1^{T_1} := g,\nu_1^{T_2} := e, \nu_1^{T_3} := i, \nu_1^{T_4} = h, \nu_1^{T_5} = j$, respectively. 
    \[
        \begin{tikzcd}
            l \ar[rd,dash,red,line width = 0.4mm,"3"] && a \ar[ld,dash,red,line width = 0.4mm,"2"] \\
            & b \ar[d,dash,red,line width = 0.4mm,"4"] & & c \ar[lld,dash,red,line width = 0.4mm,"5"] \\
            & g \\
            & T_1
        \end{tikzcd} \ \ \ \ \
        \begin{tikzcd}
            & d \ar[d,dash,purple,line width = 0.4mm,"3"] \\
            & e \\
            & T_2
        \end{tikzcd} \ \ \ \ \ 
        \begin{tikzcd}
            & f \ar[d,dash,green,line width = 0.4mm,"5"] \\
            & i \\
            & T_3
        \end{tikzcd} \ \ \ \ \ \ \ \ \ \
        \begin{tikzcd}
            \\
            h \\
            T_4 
        \end{tikzcd} \ \
        \begin{tikzcd}
            & k \ar[d,dash,violet,line width = 0.4mm,"2"] \\
            & j \\
            & T_5
        \end{tikzcd}
    \]
    Let $\nu_0^{T_1} := m$, $\nu_0^{T_2} :=  s$, $\nu_0^{T_3} := m$, $\nu_0^{T_4} := n$ and $\nu_0^{T_5} := r$. For example, for the path $gbl$ in $T_1$ with $l$ a $T_1$-leaf, we have $\lambda(\nu_0^{T_1}\nu_1^{T_1}) > \lambda(\nu_1^{T_1}b) > \lambda(bl)$ since $6 > 4 > 3$.
    \[
        \begin{tikzcd}
            l \ar[rd,dash,red,line width = 0.4mm,"3"] && a \ar[ld,dash,red,line width = 0.4mm,"2"] \\
            & b \ar[d,dash,red,line width = 0.4mm,"4"] & & c \ar[lld,dash,red,line width = 0.4mm,"5"] \\
            & g \ar[d,dash,red,line width = 0.2mm,"6"] \\
            & m
        \end{tikzcd} \ \ \ \ \
        \begin{tikzcd}
            & d \ar[d,dash,purple,line width = 0.4mm,"3"] \\
            & e \ar[d,dash,purple,line width = 0.2mm,"4"] \\
            & s
        \end{tikzcd} \ \ \ \ \ 
        \begin{tikzcd}
            & f \ar[d,dash,green,line width = 0.4mm,"5"] \\
            & i \ar[d,dash,green,line width = 0.2mm,"7"] \\
            & m
        \end{tikzcd} \ \ \ \ \ \ \ \ \ \
        \begin{tikzcd}
            \\ \\
            h \ar[d,dash,blue,line width = 0.2mm,"3"] \\
            n
        \end{tikzcd} \ \ 
        \begin{tikzcd}
            & k \ar[d,dash,violet,line width = 0.4mm,"2"] \\
            & j \ar[d,dash,violet,line width = 0.2mm,"3"] \\
            & r
        \end{tikzcd}
    \]
    Let $\llS = \{m,n,s,r\}$, $i = 3$, $V_2 = F_2 \cap \llS = \{\nu_0^{T_1} = m = \nu_0^{T_3},\nu_0^{T_4} = n, \nu_0^{T_5} = r\}$, $V_3 = F_3 \cap S = \{\nu_0^{T_2} = s\}$. Let 
    \[W = F_2 \sqcup F_3 \sqcup (F_4 \smallsetminus \{z\}) = \{m,\dots,y\}.\]
    Let $\delta: F_1 \sqcup W \to \bbN$. The definition for $\delta$ is given in the drawing for $G_\lambda$. For example, since $m \in V_2$, 
    \begin{align*}
        \delta(m) &= \gamma_2(m)\\
        &=1 + \max_{j \in \{1,2,3,4,5\}}\{\lambda(m\nu_1^{T_j}) \mid m = \nu_0^{T_j}\} \\
        &=1 + \max_{j \in \{1,3,4,5\}}\{\lambda(m\nu_1^{T_j}) \mid m = \nu_0^{T_j}\} \\
        &= 1 + \max\{\lambda(m\nu_1^{T_1}),\lambda(m\nu_1^{T_3})\} \\
        &=1 + \max\{\lambda(mg),\lambda(mi)\} \\
        &= 1 + \max\{6,7\}\\
        &= 8,
    \end{align*}
    For $T_1$-inner vertex $b$, 
    \begin{align*}
        \delta(b) &= 1 + \max\{\lambda(b\alpha) \mid \text{$b$ is the $T_1$-parent of $\alpha$}\} \\
        &= 1 + \max\{\lambda(ba),\lambda(bl)\}. \\
        &= 1 + \max\{2,3\} \\
        &= 4.
    \end{align*}
    We have for example, for $T_1$-inner vertex $b$ and $T_3$-inner vertex $i$, 
    \begin{align*}
        \lambda(bi) &= 6 \\
        &> \min\{\max\{2,3\},5\} \\
        &= \min\{\max\{\lambda(ba),\lambda(bl)\},\max\{\lambda(if)\}\} \\
        &= \min\{\max\{\lambda(b\alpha) \mid \text{$b$ is the $T_1$-parent of $\alpha$}\},\\
        & \ \ \ \ \ \ \ \ \ \, \max\{\lambda(i\beta) \mid i \text{ is the $T_3$-parent of $\beta$}\}\};
    \end{align*}
    for $T_1$-inner vertex $b$ and $b\nu_0^{T_3} = bm \in E$, we have
    \begin{align*}
        \lambda(bm) &= 5\\
        &> \min\{\max\{2,3\},\max\{6,7\}\} \\
        &=\min\{\max\{\lambda(ba),\lambda(bl)\},\max\{\lambda(mg),\lambda(mi)\}\} \\
        &=\min\{\max\{\lambda(ba),\lambda(bl)\},\max\{\lambda(m\nu_1^{T_1}),\lambda(m\nu_1^{T_3})\}\} \\
        &=\min\{\max\{b\alpha \mid \text{$b$ is the $T_1$-parent of $\alpha$}\},\\
        &\ \ \ \ \ \ \ \ \max_{j \in \{1,2,3,4,5\}}\{\lambda(m\nu_1^{T_j}) \mid m = \nu_0^{T_j}\}\};
    \end{align*}
    for $ms \in E$, we have
    \begin{align*}
        \lambda(ms) &= 5\\
        &> \min\{\max\{6,7\},\max\{4\}\} \\
        &=\min\{\max\{\lambda(mg),\lambda(mi)\},\max\{\lambda(se)\}\} \\
        &=\min\{\max\{\lambda(m\nu_1^{T_1}),\lambda(m\nu_1^{T_3})\},\max\{\lambda(s\nu_1^{T_2})\}\} \\
        &=\min\{\max_{j \in \{1,2,3,4,5\}}\{\lambda(m\nu_1^{T_j}) \mid m = \nu_0^{T_j}\}, \\
        &\ \ \ \ \ \ \ \ \ \ \max_{j \in \{1,2,3,4,5\}}\{\lambda(s\nu_1^{T_j}) \mid s = \nu_0^{T_j}\}\}.
    \end{align*}
    One can check that the given weighted vertex cover $(F_1 \sqcup W = \{a,\dots,y\},\delta)$ is a weighted vertex cover of $G_\lambda$ but $((F_1 \sqcup W) \smallsetminus \{v\},\delta|_{ (F_1 \sqcup W) \smallsetminus \{v\}})$ is not a weighted vertex cover of $G_\lambda$ for any $v \in F_1$. Hence there exists a minimal weighted vertex cover of $G_\lambda$ with cardinality $\geq 23$.
\end{example}

The next result says that the sufficient condition in Theorem~\ref{CMUnmixedEquivalenceThm} is also a necessary condition.

\begin{theorem} \label{CMUnmixedEquivalenceNecessaryThm}
    Let $G_\lambda$ be a weighted chordal graph. Let $F_1,\dots,F_m$ be the facets of $\Delta(G)$ which admit a free vertex satisfying that $[n]$ is the disjoint union of $F_1,\dots,F_m$. If $G_\lambda$ is Cohen-Macaulay, then $G$ and $\lambda$ satisfies the condition in Theorem~\ref{CMUnmixedEquivalenceThm}.
\end{theorem}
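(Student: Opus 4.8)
The plan is to prove the contrapositive. By the implication (b) $\Longrightarrow$ (c) established in Theorem~\ref{CMUnmixedEquivalenceThm} (whose proof only invokes \cite[Theorem 3.5 and Proposition 3.13]{MR3055580} and never uses the weight hypothesis), a Cohen--Macaulay $G_\lambda$ is necessarily unmixed. Hence it suffices to show: if $[n]$ is the disjoint union of $F_1,\dots,F_m$ but the weight condition of Theorem~\ref{CMUnmixedEquivalenceThm} fails, then $G_\lambda$ is mixed. Failure of the condition means that for some index, say $i=1$ after relabeling, there \emph{exists} a rooted spanning forest $\ffF$ of $G[F_1]$ of the prohibited kind: each component $T$ has a nonfree root $\nu_1^T$ and a nonfree vertex $\nu_0^T \in V \smallsetminus F_1$ with $\nu_0^T\nu_1^T \in E$, the weights strictly decrease along every root-to-leaf path, and the three displayed families of inequalities (inner--inner, inner--$\nu_0^T$, and $\nu_0^S$--$\nu_0^T$) all hold. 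I would use $\ffF$ to manufacture a minimal weighted vertex cover that is too large.

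Concretely, I would set $V' = [n]$ and define $\delta \colon [n] \to \bbN$ by $\delta(v) = 1 + \max\{\lambda(vw) \mid \text{$v$ is the $\ffF$-parent of $w$}\}$ for each $\ffF$-inner vertex $v \in F_1$; by $\delta(\nu_0^T) = 1 + \max\{\lambda(\nu_0^T\nu_1^Y) \mid \nu_0^Y = \nu_0^T\}$ on the anchor vertices; and by $\delta(v) = 1$ on every remaining vertex (all $\ffF$-leaves and all non-anchor vertices outside $F_1$). The first task is to verify that $(V',\delta)$ is a weighted vertex cover, organizing the edges of $G$ by type. A tree edge $vw$ (with $v$ the parent) is covered by the child $w$, since the strict decrease forces $\delta(w) \leq \lambda(vw)$; likewise the decrease gives $\delta(\nu_1^T) \leq \lambda(\nu_0^T\nu_1^T)$, so each edge $\nu_0^T\nu_1^T$ is covered by $\nu_1^T$. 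Every edge with a leaf or a non-anchor endpoint is covered by that weight-$1$ endpoint because $\lambda \geq 1$. The only remaining edges join two $\ffF$-inner vertices, an $\ffF$-inner vertex and an anchor, or two anchors; these are covered because the three families of inequalities say exactly that $\lambda$ on such an edge strictly exceeds the relevant $\min\{\max\{\cdots\},\max\{\cdots\}\}$, which equals $\min\{\delta(u),\delta(v)\}-1$, whence $\min\{\delta(u),\delta(v)\} \leq \lambda(uv)$ and the lighter endpoint covers. This verification runs parallel to the inequalities in the (d) $\Longrightarrow$ (c) argument of Theorem~\ref{CMUnmixedEquivalenceThm} read in the opposite direction, and I expect it to be the most laborious step.

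Next I would show that every vertex of $F_1$ is essential. For a non-root $v \in F_1$ with $\ffF$-parent $p$, the edge $pv$ satisfies $\delta(p) > \lambda(pv) \geq \delta(v)$, so in $(V',\delta)$ the edge $pv$ is covered only by $v$; for a root $v = \nu_1^T$ the edge $\nu_0^T\nu_1^T$ plays the same role with $\nu_0^T$ in place of $p$. Since passing to a smaller cover in the order of \cite[Definition 1.9]{MR3055580} only enlarges weights, the offending endpoint keeps weight exceeding $\lambda$ on that edge; hence every weighted vertex cover $(V'',\delta'') \leq (V',\delta)$ must still contain $v$, so $F_1 \subseteq V''$, in particular for a minimal one obtained via \cite[Proposition 1.12]{MR3055580}. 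As each $F_j$ is a clique, $\abs{V'' \cap F_j} \geq \abs{F_j}-1$, so using $[n] = \bigsqcup_j F_j$ I obtain $\abs{(V'',\delta'')} = \abs{V''} \geq \abs{F_1} + \sum_{j=2}^m(\abs{F_j}-1) = n-m+1$.

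Finally I would produce a competing minimal cover of strictly smaller cardinality. Since $[n]$ is the disjoint union of the facets $F_j$ with free vertices, $G$ is unmixed by \cite[Theorem 2.1]{MR2231097} with vertex cover number $n-m$; a minimum vertex cover of $G$ with all weights equal to $1$ is a weighted vertex cover of cardinality $n-m$, and minimizing it cannot drop below the vertex cover number of $G$ (its support always covers $G$ by the Note following the definition of weighted vertex cover), so its cardinality remains $n-m$. Thus $G_\lambda$ admits minimal weighted vertex covers of cardinalities $n-m$ and $\geq n-m+1$, so it is mixed and therefore not Cohen--Macaulay, which is the desired contrapositive. The principal obstacle is the case-by-case edge verification in the second paragraph, where the precise shape of the three weight inequalities must be matched to the cover condition.
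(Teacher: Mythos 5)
Your proposal is correct and takes essentially the same approach as the paper's proof: both argue by contrapositive, use the forbidden rooted spanning forest to build a weighted vertex cover in which each parent (resp.\ anchor $\nu_0^T$) gets weight one more than the maximum weight to its children (resp.\ to its roots), so that every vertex of $F_1$ is forced into any smaller cover, yielding a minimal weighted vertex cover of cardinality at least $n-m+1$, and then compare with an all-ones cover of cardinality $n-m$ to conclude that $G_\lambda$ is mixed and hence not Cohen--Macaulay. The only difference is bookkeeping: you take the support to be all of $[n]$ with weight $1$ on uninvolved vertices and bound cardinality via the clique structure of the $F_j$, whereas the paper uses the support $F_1 \sqcup W$ obtained by deleting one free vertex from each facet not meeting the anchor set; both reach the same bound.
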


\begin{proof}
    Proof by contrapositive. Without loss of generality, we assume that there exists such a rooted spanning forest $\ffF$ of $G[F_1]$ consisting of components $T_1,\dots,T_k$ in which each rooted tree $T_i$ has a nonfree vertex $\nu_1^{T_i}$ as a root such that there is a nonfree vertex $\nu_0^{T_i}$ in $V \smallsetminus F_1$ with $\nu_0^{T_i}\nu_1^{T_i} \in E$ satisfying the conditions as in the statement of the theorem. Let $\llS$ be the minimal set containing $\nu_0^{T_1},\dots,\nu_0^{T_k}$. Without loss of generality, we assume $\llS \subseteq F_2 \sqcup \dots \sqcup F_i$ for some $i \in \bbN \smallsetminus \{1\}$ and $F_j \cap \llS \neq \emptyset$ for $j = 2,\dots,i$. \par
    For $j  = 2,\dots,i$, assume $F_j \cap \llS = \{\nu_0^{T_{j_1}},\dots,\nu_0^{T_{j_\ell}}\} =: V_j$ for some subset $\{j_1,\dots,j_\ell\} \subseteq \{1,\dots,k\}$, and define $\gamma_j: V_j \to \bbN$ by
    \begin{align*}
        \gamma_j(\nu_0^{T_{j_a}}) &= 1 + \max_{j' \in \{1,\dots,k\}}\{\lambda(\nu_0^{T_{j_a}}\nu_1^{T_{j'}}) \mid \nu_0^{T_{j_a}} = \nu_0^{T_{j'}}\} \\
        &= 1 + \max_{j_{a'} \in \{j_1,\dots,j_\ell\}}\{\lambda(\nu_0^{T_{j_a}}\nu_1^{T_{j_{a'}}}) \mid \nu_0^{T_{j_a}} = \nu_0^{T_{j_{a'}}}\}.
    \end{align*}
    For $j = i+1,\dots,m$, assume that $v_j \in F_j$ is a free vertex. Let
    \[W: = F_2 \sqcup \cdots \sqcup F_i \sqcup (F_{i+1} \smallsetminus \{v_{i+1}\}) \sqcup \cdots \sqcup (F_m \smallsetminus \{v_m\}).\]
    Let $\delta: F_1 \sqcup W \to \bbN$. For $j = 2,\dots,i$, set $\delta(v) = \gamma_j(v)$ for any $v \in V_j$. Set $\delta(v) = 1$ for any $v \in W \smallsetminus (V_2 \sqcup \cdots \sqcup V_i)$. If $\nu_0^{T_{j_c}} \in V_j$ and $\nu_0^{T_{j'_d}} \in V_{j'}$ such that $\nu_0^{T_{j_c}}\nu_0^{T_{j'_d}} \in E$ for some $j,j' \in \{2,\dots,i\}$, then by assumption, 
    \begin{align*}
        \lambda(\nu_0^{T_{j_c}}\nu_0^{T_{j'_d}}) &> \min\{\max\{\lambda(\nu_0^{T_{j_c}}\nu_1^Y) \mid \text{$Y$ is a component of $\ffF$ such that $\nu_0^{T_{j_c}} = \nu_0^Y$}\},\\
        &\ \ \ \ \ \ \ \ \ \, \max\{\lambda(\nu_0^{T_{j'_d}}\nu_1^Z) \mid \text{$Z$ is a component of $\ffF$ such that $\nu_0^{T_{j'_d}} = \nu_0^Z$}\}\} \\
        &= \min\{\max_{j_{c'} \in \{j_1,\dots,j_\ell\}}\{\lambda(\nu_0^{T_{j_c}}\nu_1^{T_{j_{c'}}}) \mid \nu_0^{T_{j_c}}\nu_1^{T_{j_{c'}}}\}, \\
        &\ \ \ \ \ \ \ \ \ \, \max_{j'_{d'} \in \{j_1',\dots,j_\ell'\}}\{\lambda(\nu_0^{T_{j'_d}}\nu_1^{T_{j'_{d'}}}) \mid \nu_0^{T_{j'_d}} = \nu_1^{T_{j'_{d'}}}\}\} \\
        &= \min\{\gamma_j(\nu_0^{T_{j_c}})-1,\gamma_{j'}(\nu_0^{T_{j'_d}})-1\} \\ 
        &= \min\{\delta(\nu_0^{T_{j_c}})-1,\delta(\nu_0^{T_{j'_d}})-1\},
    \end{align*}
    implying that $\min\{\delta(\nu_0^{T_{j_c}}),\delta(\nu_0^{T_{j'_d}})\} \leq \lambda(\nu_0^{T_{j_c}}\nu_0^{T_{j'_d}})$. So we have $(W,\delta|_W)$ is a weighted vertex cover of $(G - F_1)_{\lambda|_{E(G - F_1)}}$. For $j=1,\dots,k$ set $\delta(v) = 1$ for any $T_j$-leaf $v$ and set $\delta(v) = 1+\max\{\lambda(vw) \mid \text{$v$ is the $T_j$-parent of $w$}\}$ for any $T_j$-inner vertex $v$. Since $\ffF$ is a spanning forest of $G[F_1]$, we have defined $\delta(v)$ for all $v \in F_1$. If $u$ is a $T_c$-inner vertex and $v$ is a $T_d$-inner vertex for some $c,d \in \{1,\dots,k\}$ with $c \neq d$. Then by assumption,
    \begin{align*}
        \lambda(uv) &> \min\{\max\{\lambda(uw) \mid \text{$u$ is the $T_c$-parent of $w$}\},\max\{\lambda(vx) \mid \text{$v$ is the $T_d$-parent of $x$}\}\} \\
        &= \min\{\delta(u)-1,\delta(v)-1\} \\
        &= \min\{\delta(u),\delta(v)\} - 1,
    \end{align*}
    implying that $\min\{\delta(u),\delta(v)\} \leq \lambda(uv)$. For $j = 1,\dots,k$, we have 
    \begin{align*}
        \lambda(\nu_0^{T_j}\nu_1^{T_j}) &> \max\{\lambda(\nu_1^{T_j}w) \mid \text{$\nu_1^{T_j}$ is the $T_j$-parent of $w$}\} \\
        &= \delta(\nu_1^{T_j}) - 1,
    \end{align*}
    implying that $\delta(\nu_1^{T_j}) \leq \lambda(\nu_0^{T_j}\nu_1^{T_j})$. Furthermore, if $u$ is an $T_c$-inner vertex such that $u \nu_0^{T_{j_d}} \in E$ for some $c \in \{1,\dots,k\}$ and $j \in \{2,\dots,i\}$, then by assumption,
    \begin{align*}
        \lambda(u\nu_0^{T_{j_d}}) &> \min\{\max\{\lambda(uw) \mid \text{$u$ is the $T_c$-parent of $w$}\}, \\
        &\ \ \ \ \ \ \ \ \ \, \max\{\lambda(\nu_0^{T_{j_d}}\nu_1^Y) \mid \text{$Y$ is a component of $\ffF$ such that $\nu_0^{T_{j_d}} = \nu_0^Y$}\}\} \\
        &= \min\{\delta(u)-1,\max_{j_{d'} \in \{j_1,\dots,j_\ell\}}\{\lambda(\nu_0^{T_{j_d}}\nu_1^{T_{j_{d'}}}) \mid \nu_0^{T_{j_d}}\nu_1^{T_{j_{d'}}} \in E\}\} \\
        &= \min\{\delta(u)-1,\gamma_j(\nu_0^{T_{j_d}})-1\} \\
        &= \min\{\delta(u)-1,\delta(\nu_0^{T_{j_d}})-1\},
    \end{align*}
    implying that $\min\{\delta(u),\delta(\nu_0^{T_{j_d}})\} \leq \lambda(u\nu_0^{T_{j_d}})$. Hence $(F_1 \sqcup W,\delta)$ is a weighted vertex cover of $G_\lambda$. \par
    For $j = 1,\dots,k$, let $v \in F_1 \smallsetminus \{\nu_1^{T_j}\}$ and $p_v$ its $T_j$-parent, since $(F_1 \sqcup W,\delta)$ is a weighted vertex cover of $G_\lambda$ and
    \begin{align*}
        \delta(p_v) &= 1 + \max\{\lambda(p_vw) \mid \text{$p_v$ is the $T_j$-parent of $w$}\} \\
        &\geq 1 + \lambda(p_vv),
    \end{align*}
    we have $\delta(v) \leq \lambda(p_vv)$, hence $((F_1 \sqcup W) \smallsetminus \{v\}, \delta|_{(F_1 \sqcup W) \smallsetminus \{v\}})$ is no longer a weighted vertex cover of $G_\lambda$. For $j = 1,\dots,k$, there exists $j' \in \{2,\dots,i\}$ and $j'_d \in \{j'_1,\dots,j'_\ell\}$ such that $j'_d = j$, since $(F_1 \sqcup W,\delta)$ is a weighted vertex cover of $G_\lambda$ and 
    \begin{align*}
        \delta(\nu_0^{T_j}) &= \delta(\nu_0^{T_{j'_d}}) \\
        &= \gamma_{j'}(\nu_0^{T_{j'_d}}) \\
        &= 1+ \max_{j'_{d'} \in \{j'_1,\cdots,j'_\ell\}}\{\lambda(\nu_0^{T_{j'_d}}\nu_1^{T_{j'_{d'}}}) \mid \nu_0^{T_{j'_d}}\nu_1^{T_{j'_{d'}}} \in E\} \\
        &\geq 1+\lambda(\nu_0^{T_{j'_d}}\nu_1^{T_{j'_d}}),
    \end{align*}
    we have $\delta(\nu_1^{T_j}) = \delta(\nu_1^{T_{j'_d}}) \leq \lambda(\nu_0^{T_{j'_d}}\nu_1^{T_{j'_d}})$, hence $((F_1 \sqcup W) \smallsetminus \{\nu_1^{T_j}\}, \delta|_{(F_1 \sqcup W) \smallsetminus \{\nu_1^{T_j}\}})$ is not a weighted vertex cover of $G_\lambda$. So $((F_1 \sqcup W) \smallsetminus \{v\}, \delta|_{(F_1 \sqcup W) \smallsetminus \{v\}})$ is not a weighted vertex cover of $G_\lambda$ for any $v \in F_1$. Thus, there exists a minimal weighted vertex $(F_1 \sqcup W', \delta')$ of $G_\lambda$ such that $W' \subseteq W$ and $\delta' \geq \delta$ by~\cite[Proposition 1.12]{MR3055580}. Since $F_2,\dots,F_m$ are cliques of $G$, we have $\abs{W'} \geq \abs{W} - (i-1)$. So
    \[\abs{(F_1 \sqcup W', \delta')} \geq \abs{F_1} + \abs{W} - (i-1) = n - (m-i) - (i-1) = n-m+1.\]
\par On the other hand, assume that $v_j \in F_j$ is free for $j = 1,\dots,m$. Let $U = V \smallsetminus \{v_1,\dots,v_m\}$ and $\delta': U \to \bbN$ defined by $\delta'(v) = 1$. Then $(U,\delta')$ is a weighted vertex cover of $G_\lambda$. For any $v \in U$, we have $v \in F_j$ for some $j \in \{1,\dots,m\}$ and so $vv_j \in E$, hence $(U \smallsetminus \{v\},\delta'|_{U \smallsetminus \{v\}})$ is not a weighted vertex cover of $G_\lambda$. So there is a minimal weighted vertex cover $(U,\delta'')$ of $G_\lambda$ such that $\delta'' \geq \delta'$ by~\cite[Proposition 1.12]{MR3055580}, but $\abs{(U,\delta'')} = \abs {U} = n - m$. Therefore, $G_\lambda$ is mixed and so $G_\lambda$ is not Cohen-Macaulay by Theorem~\ref{CMUnmixedEquivalenceThm}. 
\end{proof}

\section*{Acknowledgments}
I am grateful for the insightful comments and feedback provided by Keri Sather-Wagstaff and Janet Vassilev. 

\bibliographystyle{plain} 
\bibliography{bibliography}

\end{document}